\newtheorem{proposition}{Proposition}
\newtheorem{conjecture}[proposition]{Conjecture}
\def\R{\mathbb{R}}
\def\distinct{\hspace{1ex} | \hspace{1ex}}
\def\maybe{\hspace{1ex} : \hspace{1ex}}
\begin{document}

\title{An Atlas of Legendrian Knots}
\author[W.\ Chongchitmate]{Wutichai Chongchitmate}
\address{Mathematics Department, University of California at Los
  Angeles, Los Angeles, CA 90095} 
\author[L.\ Ng]{Lenhard Ng}
\address{Mathematics Department, Duke University, Durham, NC 27708}
\urladdr{\href{http://www.math.duke.edu/~ng/}{http://www.math.duke.edu/\~{}ng/}}

\begin{abstract}
We present an atlas of Legendrian knots in standard contact
three-space. This gives a conjectural
Legendrian classification for all knots with arc index at most $9$, including
alternating knots through $7$ crossings and nonalternating knots
through $9$ crossings. Our method involves a computer search of grid
diagrams and applies to transverse knots as well. The
atlas incorporates a number of new, small examples of phenomena such
as transverse nonsimplicity and non-maximal non-destabilizable Legendrian
knots, and gives rise to new infinite families of transversely
nonsimple knots.
\end{abstract}

\maketitle

\section{Introduction}

A central problem in contact knot theory is the Legendrian and transverse classification problem: how to classify
all Legendrian and transverse knots of a particular topological type
in some contact $3$-manifold. This is an interesting question even for
the most basic case, $\R^3$ with the standard contact structure
$\ker(dz-y\,dx)$. Legendrian and transverse knots have been classified
in this case for a few families of knots, including the unknot
\cite{EF}, torus knots \cite{EH}, and twist knots \cite{ENV}. The
classification problem for most other knots, however, including many
``small'' knots, is currently wide open.

In this paper, we present a conjectural Legendrian and transverse
classification for all prime knots in $\R^3$ with arc index at most
$9$. This includes 
all prime knots with $7$ or fewer crossings, all prime nonalternating knots with
$9$ or fewer crossings, and an assortment of other nonalternating
knots. (One can use the prime classification to similarly classify
composite knots, by the results of \cite{EHconnectsum}.)
The classification is presented at the back end of this paper
in the form of a ``Legendrian knot atlas''. A corresponding atlas of
transverse knots can be deduced from this.

The strategy behind our atlas is a ``probabilistic'' approach to
enumerating Legendrian and transverse knots, based on expressing them
as grid diagrams. Two grid diagrams representing Legendrian or
transverse knots are isotopic if and only if they are related by a
sequence of elementary moves, some subset of the so-called Cromwell
moves. Roughly speaking, our algorithm enumerates all grid diagrams of
a particular size and attempts to determine which of them are related
by these Cromwell moves. Unfortunately, one of the Cromwell moves,
stabilization, changes the size of the grid diagram, and so the
algorithm cannot prove, in finite time, that two grid diagrams
represent nonisotopic Legendrian or transverse knots. Nevertheless we
can guess with some degree of confidence when two grids are isotopic,
under the assumption that if two grid diagrams of a certain size are
related by Cromwell moves, then they are related by moves that do not
increase the grid size by too much.

The result of the algorithm is a computer program that can show that
various grid diagrams are isotopic, and guesses that other grid
diagrams are not isotopic. This technique seems to be surprisingly
effective in classifying Legendrian and transverse knots. In many
cases, one can prove by hand, using various recently developed
invariants, that the isotopy classes of grid diagrams produced by the
program are indeed distinct.

We hope that the wealth of examples produced by the atlas will be
useful to researchers working in contact geometry and related
fields. A precursor of sorts to this atlas, an enumeration of
Legendrian representatives of knots through $9$ crossings by Melvin
and Shrestha \cite{MS}, has provided testing material for various
projects in contact topology, and many of the Melvin--Shrestha
examples appear in some guise as part of our atlas.

In compiling the atlas, we discovered examples of several interesting
phenomena for Legendrian and transverse knots that either had not been
seen before, or had only been seen in much more complicated
examples. In particular, the atlas contains:
\begin{itemize}
\item
Legendrian (respectively
transverse) knots that do
not maximize Thurston--Bennequin number (self-linking number) but are
not destabilizable;
\item
knots that can be proven to be transversely
nonsimple by inspection and a bit of knot Floer homology, without
computer verification or more complicated techniques;
\item
knots that can
be proven to be transversely nonsimple only through a recently
developed invariant, transverse homology, and not by knot Floer
homology;
\item
transverse knots that are conjecturally distinct from their transverse
mirrors;
\item
Legendrian knots with more than one linearized contact homology.
\end{itemize}

Indeed, some examples in the atlas can readily be generalized to give,
for instance, infinite families of knots that can be proven to be
transversely nonsimple by inspection. Furthermore, we show the following
result, with an analogous statement also holding for Legendrian knots.

\begin{proposition}
There are non-destabilizable prime transverse knots whose
self-linking number is arbitrarily far from maximal.
\end{proposition}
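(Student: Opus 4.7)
The plan is to bootstrap from a single small example furnished by the atlas to an infinite family of prime transverse knots via a primality-preserving satellite construction.

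First, I would identify from the atlas a prime transverse knot $T_0$ of topological type $K_0$ such that $T_0$ is non-destabilizable yet $sl(T_0) \le \overline{sl}(K_0) - 2$. The existence of such an example is one of the central phenomena the atlas is designed to catalog (see the bulleted list in the introduction). If $T_0$ is chosen so that a computable transverse invariant such as the LOSS invariant, the Heegaard Floer transverse GRID invariant, or the transverse homology Poincar\'e polynomial is non-vanishing (and this can be verified once, by hand or by machine on this single seed), that same invariant witnesses non-destabilizability of $T_0$.

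Next, I would build a sequence of distinct prime knots $K_n$ from $T_0$ by iterated cabling or satelliting, each equipped with a distinguished transverse representative $T_n$. A natural concrete choice is a family of transverse $(p_n,q_n)$-cables of $T_0$ chosen so that $\gcd(p_n,q_n)=1$, which keeps $K_n$ prime, and so that the self-linking number satisfies a cabling formula of the form $sl(T_n) = p_n^2 \cdot sl(T_0) + O(p_n)$. On the other hand, $\overline{sl}(K_n)$ can be controlled from above by a slice-Bennequin-type inequality expressed in terms of the Seifert genus or HOMFLY bound of $K_n$. Comparing these two expressions, one sees that the initial self-linking deficit of $T_0$ propagates and in fact amplifies under cabling, so that $\overline{sl}(K_n) - sl(T_n) \to \infty$.

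To propagate non-destabilizability from $T_0$ to every $T_n$, I would invoke a cabling or satellite theorem for the chosen transverse invariant: the invariant of a cable is expressed as a ``naturality'' image of the invariant of its companion, so non-vanishing at the seed implies non-vanishing along the whole family. This is the cleanest route since it reduces an infinite family of non-destabilizability checks to a single check on $T_0$.

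The main obstacle is exactly this uniform propagation step. For an isolated example the atlas can appeal to direct calculation, but along an infinite family one needs a structural naturality statement for a transverse invariant under the chosen satellite pattern, and one must verify that the invariant remains non-vanishing in every cabling parameter. A secondary technical point is choosing the pattern so that the upper bound on $\overline{sl}(K_n)$ coming from slice genus or HOMFLY is sharp enough that it does not itself grow as fast as $sl(T_n)$, thereby actually enlarging the gap rather than closing it.
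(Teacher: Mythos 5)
Your proposal takes a genuinely different route from the paper, but it contains a gap that you yourself identify and that is not merely technical: the uniform propagation of non-destabilizability along the cabled family. The paper avoids this entirely. Its argument (Propositions~\ref{prop:nondestabn} and~\ref{prop:prime}) writes down an explicit family of grid diagrams of size $3n+7$, generalizing the $m(10_{161})$ example, whose positive transverse pushoffs $T_{n,2}$ have $sl = 2n+1$; non-destabilizability is then read off \emph{by inspection} for every $n$ simultaneously, because $\widehat{\theta}(T_{n,2})$ is visibly a cycle not in the image of the differential (there are no empty rectangles with NW--SE corners at upper-right corners of X's), and $\widehat{\theta}$ vanishes on stabilizations by \cite{OST}. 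The self-linking deficit $2n$ comes from exhibiting a second explicit grid diagram $T_{n,1}$ with $sl=4n+1$, and primality is checked via an explicit braid word, an H-flype down to a $3$-braid, and a comparison of Alexander and Jones polynomials. No naturality or satellite theorem is needed at any point.

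By contrast, your plan hinges on a ``naturality under cabling'' statement for $\widehat{\theta}$, the LOSS invariant, or transverse homology, asserting that non-vanishing for the companion implies non-vanishing for the cable. No such theorem was available (the paper even remarks that whether the Legendrian contact homology of the $(2,3)$-cable of the $(2,3)$-torus knot is nontrivial was still open), and the existing cabling results in this direction---Etnyre--Honda \cite{EH2} and the Etnyre--LaFountain--Tosun work the paper cites as a parallel---prove non-destabilizability of cables by convex surface theory, not by invariant naturality. So the one step your argument cannot reduce to a single seed computation is exactly the step that carries all the content. There is also a smaller logical slip: to show $\overline{sl}(K_n)-sl(T_n)\to\infty$ you need a \emph{lower} bound on $\overline{sl}(K_n)$ (obtained by cabling a maximal representative of the companion), not an upper bound from a slice--Bennequin or HOMFLY inequality as you state. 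The lesson of the paper's proof is that choosing the family so that the obstruction is visible diagrammatically for all $n$ at once is what makes the infinite statement cheap.
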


\noindent
We note that similar results have been obtained by Etnyre, LaFountain,
and Tosun, but with a completely different set of examples (cables of
torus knots).

There are a fair number of knots in the atlas (drawn in red) that we
conjecture, but
are currently unable to prove, are distinct. It would be interesting to
know if various ``modern'' techniques could be applied to these knots:
Massey products on
linearized contact homology \cite{CEKSW}, Legendrian Symplectic Field
Theory \cite{NgSFT}, and so forth.

The atlas itself is available as a standalone file from
\begin{center}
\href{http://www.math.duke.edu/~ng/atlas/}{\texttt{http://www.math.duke.edu/\~{}ng/atlas/}} 
\end{center}
where an analogous
atlas for unoriented two-component Legendrian links, as well as
various source files, can also be downloaded. Any future updates to the
atlas will be posted there as well.

In Section~\ref{sec:background}, we provide a quick summary of the
terms we use in the atlas, and describe the algorithm used to produce
the Legendrian knot atlas. We discuss the particular examples and families
of examples, illustrating the aforementioned unusual phenomena and
others, in Section~\ref{sec:phenomena}. The Legendrian knot atlas
itself comprises Section~\ref{sec:atlas}.

\section*{Acknowledgments}

The authors would like to thank John Etnyre, Hiroshi Matsuda, 
Dan Rutherford, Josh
Sabloff, and Shea Vela-Vick for illuminating discussions. Much of this
work appeared in the first author's undergraduate honors thesis at
Duke University, with support from the PRUV program at Duke. The
second author was supported by NSF grant DMS-0706777 and NSF CAREER
grant DMS-0846346.

\section{Background and Methodology}
\label{sec:background}

\subsection{Background}

Of the various ways to depict Legendrian and transverse knots in
standard contact $\R^3$, we will exclusively use grid diagrams. Here we
briefly recall the salient features of grid diagrams and their
relationship to Legendrian and transverse knots; a more detailed
discussion can be found in, e.g., \cite{NT,OST}, and a more general
introduction to contact knot theory in \cite{Etnyresurvey}.

A \textit{grid diagram} is an $n\times n$ square grid containing $n$
X's and $n$ O's in distinct squares, such that each row and each
column contains exactly one X and one O. The \textit{grid number} of a
grid diagram is $n$. Given a grid diagram, one can
obtain a diagram of an oriented link in $\R^3$ by connecting O's to
X's horizontally and X's to O's vertically, and having all vertical
line segments pass over all horizontal line segments wherever they
cross. We will use grid diagrams and the associated link diagrams
interchangeably. One can also obtain a front diagram for an oriented Legendrian
link in $\R^3$ by rotating the link diagram $45^\circ$
counterclockwise and smoothing corners. Any topological knot, and
indeed any Legendrian knot, can be represented by a grid diagram;
the \textit{arc index} of a topological knot is the minimum grid
number over all grid diagrams representing the knot.

\begin{figure}
\centerline{
\includegraphics[height=2.5in]{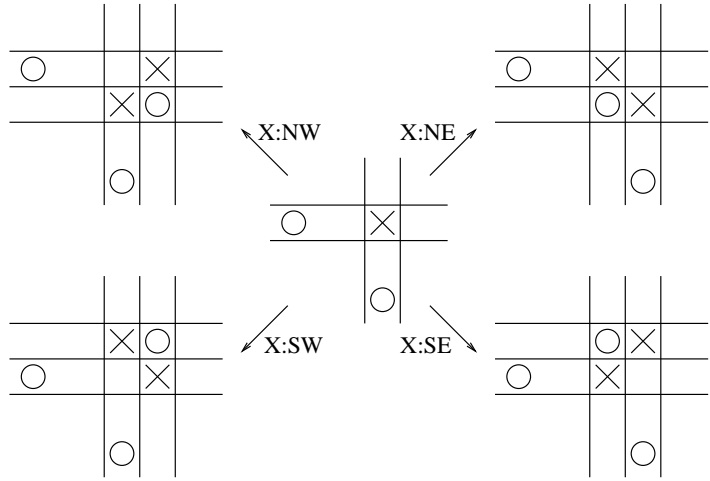}
}
\caption{The four types of X stabilizations on a grid diagram.
}
\label{fig:stabilization}
\end{figure}

There are three \textit{Cromwell moves} relating grid diagrams, each
of which preserves topological link type:
\begin{itemize}
\item
\textit{torus translation}, which moves the topmost row (or bottommost
row, leftmost column, rightmost column) of a grid
diagram to the bottommost row (topmost row, rightmost column, leftmost column) of the grid;
\item
\textit{commutation}, which switches adjacent rows (columns) in which
the segments connecting O's and X's are either disjoint or nested when
projected to a single horizontal (vertical) line;
\item
\textit{stabilization}, which increases grid number by $1$ and
replaces a single X (O) in the diagram by a $2\times 2$ square with
two X's (O's) and one O (X).
\end{itemize}
Of these, the most interesting to us is stabilization and its inverse
operation, destabilization. Stabilization comes in
eight flavors, four X stabilizations and four O stabilizations,
depending on whether a single X or O is replaced and the form of the
resulting $2\times 2$ square. It suffices for our purposes to consider
only the X stabilizations, which are depicted in
Figure~\ref{fig:stabilization} (the O stabilizations are redundant).

Two grid diagrams represent isotopic topological links if and only if
they are related by some sequence of Cromwell moves.
We can also consider Legendrian and transverse links, up to Legendrian
and transverse isotopy, to be grid diagrams modulo certain Cromwell moves:
\begin{itemize}
\item
\textit{Legendrian links} are grid diagrams modulo torus translation,
commutation, and X:NE and X:SW stabilization and destabilization;
\item
\textit{transverse links} are grid diagrams modulo torus translation,
commutation, and X:NE, X:SW, and X:SE stabilization and destabilization.
\end{itemize}
In this language, an enumeration of Legendrian or transverse links up
to isotopy becomes an enumeration of grid diagrams up to the
appropriate equivalence relation. Also, any Legendrian link can be
viewed as a transverse link; in contact topology, the resulting
transverse link is called the \textit{positive transverse pushoff} of
the Legendrian link.

The \textit{classical invariants} of Legendrian and transverse links
in standard contact $\R^3$, which are unchanged by Legendrian or
transverse isotopy, are defined in terms of a grid diagram as
follows:
\begin{itemize}
\item
the
\textit{Thurston--Bennequin number}, $tb$, is the writhe of the link
diagram (the number of crossings counted with sign) minus the number
of NE corners of the link diagram;
\item
the \textit{rotation number}, $r$, is $1/2$ of the total number of NE
and SW corners, counted with sign, where a corner is counted
positively if it is traversed down and to the right, and negatively if
it is traversed up and to the left;
\item
the \textit{self-linking number}, $sl$, is $tb-r$.
\end{itemize}
The Thurston--Bennequin and rotation numbers are invariant under
Legendrian isotopy, while the self-linking number is invariant under
transverse isotopy.

A topological knot type is \textit{Legendrian simple} (respectively
\textit{transversely simple}) if any two Legendrian (transverse) knots
of that type with the same $tb$ and $r$ ($sl$) are necessarily
Legendrian (transversely) isotopic. Any Legendrian simple knot is also
transversely simple. Proofs that various knot types are Legendrian
nonsimple have been obtained as applications of certain
``non-classical'' Legendrian invariants, such as the Legendrian
contact homology of Chekanov \cite{Ch} and Eliashberg and the ruling invariants
of Chekanov--Pushkar \cite{ChP} and Fuchs \cite{F}. It has historically been more
difficult to establish transverse nonsimplicity than Legendrian
nonsimplicity.

The operations of X:NW and X:SE stabilization descend to Legendrian
knots, where they become \textit{positive} and \textit{negative
  Legendrian stabilization} and change $(tb,r)$ by
$(-1,1)$ and $(-1,-1)$, respectively. The operations of positive and
negative Legendrian stabilization, which we denote by $S_+$ and $S_-$,
commute up to Legendrian isotopy: $S_+S_-(L) = S_-S_+(L)$.
Transverse knots can be
seen as Legendrian knots modulo negative Legendrian stabilization, and
the operation of X:NW stabilization descends to transverse knots,
where it is called \textit{transverse stabilization} and decreases
$sl$ by $2$. If a Legendrian or transverse knot is a stabilization
of another, then we say that it is \textit{destabilizable}. Since
destabilization increases $tb$ by $1$ (Legendrian) and $sl$ by $2$
(transverse), and $tb$ and $sl$ are bounded above in any given
topological type by a classical result of Bennequin, there are
non-destabilizable Legendrian and transverse knots in every knot type.

\begin{figure}
\centerline{
\raisebox{-0.2in}{\includegraphics[height=1in]{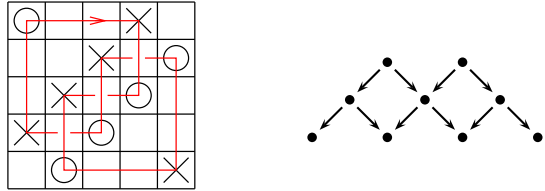}}
\hspace{0.5in}
\begin{pspicture}(3,1.4)
				\psdot(1,1.2)
				\psdot(2,1.2)
				\psline{->}(0.9,1.1)(0.6,0.8)
				\psline{->}(1.1,1.1)(1.4,0.8)
				\psline{->}(1.9,1.1)(1.6,0.8)
				\psline{->}(2.1,1.1)(2.4,0.8)
				\psdot(0.5,0.7)
				\psdot(1.5,0.7)
				\psdot(2.5,0.7)
				\psline{->}(0.4,0.6)(0.1,0.3)
				\psline{->}(0.6,0.6)(0.9,0.3)
				\psline{->}(1.4,0.6)(1.1,0.3)
				\psline{->}(1.6,0.6)(1.9,0.3)
				\psline{->}(2.4,0.6)(2.1,0.3)
				\psline{->}(2.6,0.6)(2.9,0.3)
				\psdot(0,0.2)
				\psdot(1,0.2)
				\psdot(2,0.2)
				\psdot(3,0.2)
			\end{pspicture}
}
\caption{
Grid diagram for a Legendrian left-handed trefoil with $(tb,r) =
(-6,1)$, and the Legendrian mountain range for the left-handed
trefoil. The top row of the mountain range depicts $(tb,r) = (-6,-1)$
and $(-6,1)$; the next row, $(tb,r) = (-7,-2)$, $(-7,0)$, and
$(-7,2)$; and so forth. The arrows represent positive (pointing to the
right) and negative (to the left) Legendrian stabilization. The
mountain range continues infinitely downwards by application of stabilizations.
Note that the
left-handed trefoil is Legendrian simple: each value of $(tb,r)$ has
at most one Legendrian representative.
}
\label{fig:mtnrange}
\end{figure}

In the atlas, we depict Legendrian knots of a particular topological
type via a \textit{Legendrian mountain range}, whereby isotopy classes
of Legendrian knots are plotted according to their $(tb,r)$, with $tb$
in the vertical direction and $r$ in the horizontal
direction. Positive/negative stabilization are depicted in the
mountain range by arrows pointing down and to the right/left. The
classification of transverse knots of a particular topological type
can be deduced from the Legendrian classification by modding out by
the effect of negative Legendrian stabilization. See
Figure~\ref{fig:mtnrange}.

Finally, we describe certain symmetries of Legendrian and transverse
knots that are useful to consider in the atlas. Given a Legendrian knot $L$,
one can reverse orientation to
obtain the Legendrian knot $-L$; this corresponds to switching X's and
O's in a grid diagram, and replaces $(tb,r)$ by $(tb,-r)$. (Since this
operation changes the underlying topological knot to its orientation
reverse, it may change topological knot type in general, but all of
the knots in the atlas are isotopic to their orientation reverses.) We remark that orientation reversal intertwines stabilizations: $S_+(-L) = -S_-(L)$.
One can also define the \textit{Legendrian mirror} $\mu(L)$ to be the
result of applying the contactomorphism $(x,y,z) \mapsto (-x,y,-z)$ to $L$;
this corresponds to rotating the grid diagram $180^\circ$, and
replaces $(tb,r)$ by $(tb,-r)$. The combination of the two symmetries,
$L \mapsto -\mu(L)$, descends to transverse knots and is called the
\textit{transverse mirror}.

\subsection{Methodology}

Here we give a brief, and somewhat simplified,
summary of the algorithm used to produce the
Legendrian knot atlas. More details can be found in \cite{Chong}; see
\href{http://www.math.duke.edu/~ng/atlas/}{\texttt{http://www.math.duke.edu/\~{}ng/atlas/}} for source files.

View the set of all grid diagrams (of arbitrary size), modulo
torus translation, as an infinite graph $\Gamma$. Connect two vertices of $\Gamma$ if they are related by a single commutation move, or a single
stabilization move (of appropriate restricted type corresponding to
Legendrian or transverse isotopy). The connected components of $\Gamma$ are precisely isotopy classes of Legendrian or
transverse knots.

The graph $\Gamma$ has an increasing filtration of finite subgraphs $\Gamma_n$ whose vertices consist of all grid diagrams of size at most $n$. Determining connected components of $\Gamma_n$ is a case of the familiar pathfinding problem in computer science, and approximates the problem of determining connected components of $\Gamma$.

The algorithm first produces a list of vertices of $\Gamma_9$, using a
straightforward modification of the technique used by Jin, Kim, and
Lee \cite{JKL} to enumerate all prime knots with arc index at most
$10$. We eliminate grid diagrams corresponding to multicomponent links
and divide the rest according to their topological knot type and
classical invariants ($(tb,r)$ for Legendrian, $sl$ for
transverse). Given the remaining grid diagrams of a particular knot
type and classical invariants, the algorithm then runs a bidirectional
search to determine which diagrams are connected to each other in
$\Gamma_n$, where $n$ can be adjusted and depends on the knot type,
but is typically $10$ or $11$. This
allows us to reduce the set of grid diagrams to a smaller set that is
guessed by the program to represent pairwise nonisotopic Legendrian or
transverse knots. 

Several timesaving features have been incorporated into the actual
program, which is implemented in Java, including: eliminating grid
diagrams that include an adjacent X-O pair and are immediately
destabilizable; first considering unoriented grid diagrams (where X's
and O's are interchangeable); and 
adding edges corresponding to other moves that preserve Legendrian
isotopy type and grid size, including the $S_2$ move from
\cite{NT}. See \cite{Chong} for details. 

Because of our algorithm for constructing Legendrian knots, the
completeness of our table is related to the following. 

\begin{conjecture}
Any Legendrian knot of maximal Thurston--Bennequin number has a grid
diagram representative of minimal grid number. More generally, for a
topological knot $K$, let $\overline{tb}(K)$ and $\alpha(K)$ denote
the maximal Thurston--Bennequin number and arc index of $K$,
respectively; then any Legendrian knot of type $K$ and
Thurston--Bennequin number $\overline{tb}(K)-m$ can be represented by
a grid diagram of size $\alpha(K)+m$. 
\end{conjecture}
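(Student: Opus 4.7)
The plan is to attack the conjecture by induction on $m = \overline{tb}(K) - tb(L)$, separating into two essentially different cases at each step. For the \emph{destabilization} step, if $L$ with $tb(L) = \overline{tb}(K)-m$ is Legendrian destabilizable, write $L = S_{\pm}(L')$ with $tb(L') = \overline{tb}(K)-(m-1)$; by the inductive hypothesis $L'$ has a grid $G'$ of size $\alpha(K)+(m-1)$, and applying the corresponding X:NE or X:SW stabilization to $G'$ yields a grid of size $\alpha(K)+m$ representing $L$. This disposes of all destabilizable Legendrian knots, conditional on settling the base case.

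Two situations require a direct, non-inductive construction: the base case $m=0$, where one must show that every maximal-$tb$ representative of $K$ admits a grid of size $\alpha(K)$; and non-destabilizable $L$ with non-maximal $tb$, whose existence is highlighted in this very atlas and blocks any reduction to smaller $m$. For the base case, the natural plan is to begin with an arbitrary minimal grid $G_0$ for the topological knot $K$, which represents some Legendrian knot $L_0$ with $tb(L_0)\le \overline{tb}(K)$, and then use Legendrian-preserving Cromwell moves (torus translations, commutations, X:NE and X:SW (de)stabilizations, plus the $S_2$ move of \cite{NT}) to interpolate between $G_0$ and the target maximal-$tb$ representative, ideally never leaving the $\alpha(K)$-size range for long.

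The main obstacle is the absence of any monotone reduction principle for Legendrian grid size. Topological destabilizations of type X:NW, X:SE, or any O type can shrink a grid but are not Legendrian moves, so they are forbidden here, while Legendrian destabilization is by assumption unavailable at maximum $tb$ (and for the non-destabilizable case as well). Without a bound on how large intermediate grids may need to be in the course of a Legendrian-preserving reduction, no direct combinatorial procedure is obviously finite; this is essentially the same obstruction that prevents the atlas's bidirectional search from certifying nonisotopy in general.

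A more promising indirect approach is to prove a grid-size bound of the form $n_{\min}(L) \le \alpha(K) + (\overline{tb}(K)-tb(L))$ nonconstructively, perhaps by combining HOMFLY or Kauffman polynomial estimates (which simultaneously control arc index and $\overline{tb}(K)$) with grid-diagram information coming from knot Floer homology. As partial confirmation, one can mechanically check the conjecture for every Legendrian representative displayed in the atlas by exhibiting an explicit grid of the predicted size $\alpha(K)+m$; consistent success at this scale would sharpen the conjecture and may suggest structural patterns exploitable in a general proof.
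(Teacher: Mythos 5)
There is a fundamental mismatch here: the statement you are trying to prove is presented in the paper as a \emph{conjecture}, not a theorem. The authors offer no proof, and in fact explicitly state that they ``suspect that it is probably false in general''; the only evidence given is that their computer search found no counterexamples at small grid number. So there is no proof in the paper to compare against, and any complete proof attempt would be proving something the authors themselves doubt.

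Your proposal, to its credit, does not actually claim to complete the argument, but that means it has a genuine and unfilled gap --- indeed the entire mathematical content is missing. The inductive destabilization step is the easy part (an X:NE or X:SW stabilization of a grid of size $\alpha(K)+m-1$ does give a grid of size $\alpha(K)+m$), but it only ever covers Legendrian knots that are stabilizations of knots already known to satisfy the conjecture. The two cases you flag as requiring direct construction --- the base case $m=0$ and non-destabilizable knots with non-maximal $tb$ --- are exactly where the difficulty lives, and the paper itself proves the second case is nonempty: $m(10_{145})$, $m(10_{161})$, and $12n_{591}$ all have non-destabilizable Legendrian representatives with non-maximal $tb$ (Proposition~\ref{prop:nondestabexLeg}), with further conjectural examples $m(10_{139})$, $10_{161}$, $m(12n_{242})$, and the family of Proposition~\ref{prop:nondestabn} pushes the defect $m$ arbitrarily high. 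Your fallback suggestion of deriving the bound $n_{\min}(L)\le\alpha(K)+m$ from HOMFLY/Kauffman estimates also cannot work as stated: those polynomial bounds give upper bounds on $\overline{tb}$ and lower bounds on arc index for the \emph{topological} type, but provide no mechanism for producing a small grid representative of a \emph{specific Legendrian isotopy class}, which is what the conjecture demands. In short, the proposal is a reasonable map of the obstructions but should not be read as progress toward a proof; if anything, the examples in this paper are the natural place to hunt for a counterexample.
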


\noindent
We have expressed this statement, which is related to a question in \cite{Ngarc}, as a conjecture, although we suspect that it is probably false in general. However, it appears to be true for small knots---the program failed to find any counterexamples for small grid number---and the completeness of the atlas relies on the conjecture being true, or approximately true, for the knots in the table.

On a related note, it is interesting to find grid diagrams that are not minimal within their topological type, but nevertheless cannot be destabilized without first being stabilized; that is, non-minimal grid diagrams where torus translation and commutation (the Cromwell moves that preserve grid number) do not suffice to produce a destabilizable diagram. The above conjecture suggests that such grid diagrams may correspond to Legendrian knots that have non-maximal $tb$ but are non-destabilizable. It is easy to modify our program to find all such grid diagrams of a certain size. Indeed, the relevant grid diagrams of size at most $10$ (which then represent knots of arc index at most $9$) all produce non-maximal Legendrian knots that are either provably or conjecturally non-destabilizable. These appear in the atlas as non-maximal Legendrian knots of type $m(10_{139})$, $m(10_{145})$, $10_{161}$, $m(10_{161})$, $m(12n_{242})$, and $12n_{591}$. See also the discussion in Sections~\ref{ssec:transII} and~\ref{ssec:Legnondestab}.

\section{Notable Phenomena}
\label{sec:phenomena}

In this section, we observe instances of interesting behavior in the atlas. These include transverse nonsimplicity, which we extend to families beyond the knots in the atlas, and non-destabilizability for certain Legendrian knots. We also document the methods we use to distinguish various knots in the atlas.

\subsection{Transverse nonsimplicity I}
\label{ssec:transI}

Among knots with arc index at most $9$, the computer program guesses
that exactly $13$ are transversely nonsimple:
\begin{gather*}
m(7_2),~ m(7_6),~ 9_{44},~ m(9_{45}),~ 9_{48},~ 10_{128},~
m(10_{132}),\\
 10_{136},~ m(10_{140}),~ m(10_{145}),~ 10_{160},~
m(10_{161}),~ 12n_{591}.
\end{gather*}
These knots can be seen in the atlas as the ones whose mountain ranges
(conjecturally) contain distinct Legendrian knots with the same
$(tb,r)$ that remain distinct under repeated stabilization of one type
or the other. (These are pictorially represented in the atlas by
mountain ranges with boxes that persist under stabilization.)

It should be emphasized that the computer program cannot prove either
transverse simplicity or transverse nonsimplicity, but it can make predictions. Of the knots in
the table, the program guesses that $69$ are transversely simple. Of
these, $18$ are currently known to be transversely simple, precisely
corresponding to torus knots \cite{EH} and certain
twist knots \cite{EH,ENV}:
\begin{gather*}
3_1,~ m(3_1),~ 4_1,~ 5_1,~ m(5_1),~ 5_2,~ m(5_2),~ 6_1,~ m(6_1),~
7_1,~ m(7_1),~ 7_2,~ \\
8_{19},~ m(8_{19}),~ 10_{124},~ m(10_{124}),~
15n_{41185},~ m(15n_{41185}).
\end{gather*}
Proving transverse simplicity for the remaining $51$ knots appears to
be difficult and might involve convex surface techniques as in
\cite{EH,ENV}.

On the other hand, proving transverse nonsimplicity can sometimes be a
simple matter of applying one of the known transverse invariants.\footnote{The
techniques of Birman and Menasco \cite{BM2,BM} are another approach to
transverse nonsimplicity, but the knots of braid index $3$ that they
have proven to be transversely nonsimple all have arc index at least $10$ and are not covered in the atlas.
}
The $\widehat{\theta}$ transverse invariant in knot Floer homology of
Ozsv\'ath, Szab\'o, and Thurston \cite{OST} proves that $5$ of the
$13$ transversely nonsimple candidates listed above are
indeed transversely nonsimple: $m(10_{132})$, $m(10_{140})$,
$m(10_{145})$, $m(10_{161})$, and $12n_{591}$. In each of these cases,
the computer program of \cite{NOT} demonstrates that
$\widehat{\theta}$ is zero for one of the transverse representatives
and nonzero for the other. (In particular, this precise computation is
presented in \cite{NOT} for $m(10_{132})$ and $m(10_{140})$.)
A related transverse
invariant in knot Floer homology due to \cite{LOSS} has been used in
\cite{OSt} to
prove transverse nonsimplicity for $m(7_2)$.

Recently a new transverse invariant, transverse homology, has been
introduced by the second author in collaboration with Ekholm, Etnyre,
and Sullivan; see \cite{EENStransverse,Ngtranshom}. As described in
\cite{Ngtranshom}, transverse homology proves transverse nonsimplicity
for $10$ of the above $13$ candidates, including the $5$ also given
by knot Floer homology:
$m(7_2),$ $m(7_6),$ $9_{44}$, $9_{48}$,
$m(10_{132})$, $10_{136}$, $m(10_{140})$, $m(10_{145})$,
$m(10_{161})$, $12n_{591}$.

In all cases involving transverse nonsimplicity, the precise statement
is as follows: for a particular knot type, there are two grid diagrams
in the atlas representing Legendrian knots $L_1,L_2$ of that knot
type, such that the positive transverse pushoffs of $L_1,L_2$ are not
transversely isotopic. It follows that arbitrary negative
Legendrian stabilizations of $L_1,L_2$ are distinct, as are arbitrary
positive Legendrian stabilizations of $-L_1,-L_2$ (alternatively,
positive Legendrian stabilizations of $\mu(L_1),\mu(L_2)$). For an
enumeration of which specific grid diagrams in the atlas correspond to
distinct transverse knots, see Table~\ref{table:stab} at the end of the atlas.

There are $4$ instances where the atlas guesses, but the above
invariants so far fail to prove, that certain
transverse knots are distinct. These are the $3$ knots
$m(9_{45})$, $10_{128}$, and $10_{160}$, which we conjecture but
cannot prove are transversely nonsimple, and the knot $9_{44}$, where
the program finds three possibly distinct transverse knots but the
invariants can only distinguish two. In all these cases, the issue is
a subtle involutive operation on transverse knots called the
transverse mirror \cite{NT}. In terms of Legendrian knots, this
operation can be described as follows: given a Legendrian knot $L$, the positive
transverse pushoffs of $L$ and $-\mu(L)$ are defined to be transverse
mirrors. (In general, transverse mirrors are topologically related by
orientation reversal, but all of the topological knots in the atlas
are invariant under orientation reversal.)
Transverse mirrors are difficult to distinguish using the known
invariants, and the transverse mirror pairs in the $4$ knot types
above are conjectured but not proven to be distinct.



\subsection{Transverse nonsimplicity II}
\label{ssec:transII}

Three of the transversely nonsimple knot types described in the
previous section---$m(10_{145})$, $m(10_{161})$, and
$12n_{591}$---merit further discussion. These are knots with a
transverse representative that does not maximize self-linking number
but is non-destabilizable.

\begin{figure}
\centerline{
\includegraphics[width=5.5in]{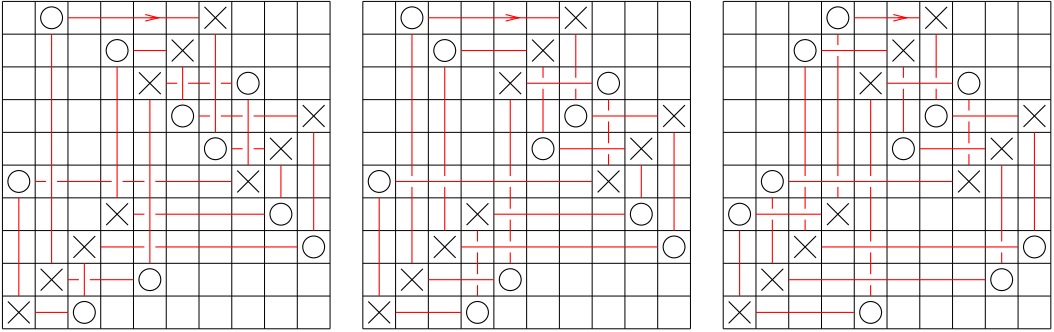}
}
\caption{
Grid diagrams representing nondestabilizable transverse knots of type
$m(10_{145})$, $m(10_{161})$, and $12n_{591}$.
}
\label{fig:nondestabex}
\end{figure}

\begin{proposition}
In each of the knot types $m(10_{145})$, $m(10_{161})$, and
$12n_{591}$, there are transverse knots
$T_1,T_2$ for which $sl(T_2) = sl(T_1)-2$ but $T_2$ is
not the stabilization of any transverse knot.
\label{prop:nondestabex}
\end{proposition}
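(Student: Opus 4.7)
For each of the three knot types, take $T_2$ to be the positive transverse pushoff of the Legendrian knot represented by the corresponding grid diagram in Figure~\ref{fig:nondestabex}, and take $T_1$ to be the positive transverse pushoff of a maximum--Thurston--Bennequin-number Legendrian representative identified in the atlas. The first step is to read $tb$ and $r$ off each grid, verify the topological knot type via a standard knot-identifying routine, and confirm $sl(T_2) = tb - r = sl(T_1) - 2$; this is routine and contains no content beyond direct inspection of Figure~\ref{fig:nondestabex}.

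For non-destabilizability of $T_2$, suppose for contradiction that $T_2 \simeq \sigma(T')$ for some transverse knot $T'$ of the same type. Then $sl(T') = sl(T_2) + 2 = sl(T_1)$, so $T'$ attains the maximal self-linking number. Inspection of the top of each Legendrian mountain range in the atlas shows that the maximal-$sl$ transverse class is unique in each of the three knot types (the maximal-$tb$ row collapses to a single class after quotienting by negative Legendrian stabilization), so $T' \simeq T_1$, and the proposition reduces to the inequality $T_2 \not\simeq \sigma(T_1)$. This inequality is precisely the transverse nonsimplicity statement recalled in Section~\ref{ssec:transI}: for each of $m(10_{145})$, $m(10_{161})$, and $12n_{591}$ the transverse homology invariant of Ekholm--Etnyre--Ng--Sullivan has been computed in \cite{Ngtranshom} and shown to distinguish the two transverse representatives at the $sl = sl(T_1) - 2$ level, namely $T_2$ and the destabilizable $\sigma(T_1)$.

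The main obstacle is the uniqueness of the maximum-$sl$ transverse class in each of these three cases. Although transparent from the atlas, a fully rigorous justification combines the sharp HOMFLY (or Kauffman) upper bound on $\overline{tb}$ with an enumeration of the max-$tb$ Legendrian representatives modulo negative Legendrian stabilization; for these knot types this enumeration produces exactly one transverse class. With that uniqueness in hand, the transverse homology computations imported from \cite{Ngtranshom} supply the remaining ingredient and the argument closes; the computation of $sl$ from the grid diagrams and the identification of the underlying topological knot types are then the only remaining routine verifications.
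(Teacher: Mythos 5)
Your reduction has a genuine gap. You argue: if $T_2=\sigma(T')$ then $sl(T')=sl(T_1)$ is maximal, the maximal-$sl$ transverse class is unique, hence $T'\simeq T_1$, and so it suffices to show $T_2\not\simeq\sigma(T_1)$. The problem is the uniqueness step. The atlas's list of Legendrian (hence transverse) representatives at any given $(tb,r)$ is only \emph{conjecturally} complete: completeness rests on the unproven conjecture that every maximal-$tb$ Legendrian knot admits a grid diagram of minimal grid number, and on the program's inability to certify in finite time that its search has found all isotopy classes. The ``enumeration of the max-$tb$ Legendrian representatives modulo negative stabilization'' that you invoke as the rigorous justification is precisely the part of the paper that is not rigorous; the HOMFLY bound controls only the \emph{values} of $(tb,r)$ that can occur, not the number of classes realizing them. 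So your argument, as structured, proves only that $T_2$ is not a stabilization of $T_1$, not that it is not a stabilization of \emph{any} transverse knot.

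The paper's proof avoids this entirely and is the idea you are missing: the transverse invariant $\widehat{\theta}$ in knot Floer homology vanishes on \emph{every} stabilized transverse knot (a theorem of Ozsv\'ath--Szab\'o--Thurston), and for each grid diagram in Figure~\ref{fig:nondestabex} one sees by inspection that $\widehat{\theta}(T_2)\neq 0$, since the distinguished generator (upper-right corners of the X's) supports no empty rectangles with NW--SE corners at two such points and hence is not in the image of the differential. Nonvanishing of $\widehat{\theta}$ therefore rules out $T_2$ being a stabilization of anything at all, with no need to classify, or even bound, the transverse classes at maximal self-linking number. If you want to salvage your approach, you would need an invariant-based obstruction of this ``vanishes on all stabilizations'' type rather than a case analysis over possible destabilizations.
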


\begin{proof}
This result can be proven using either of the transverse invariants
discussed in the previous section, but it is easiest to use the
$\widehat{\theta}$ invariant in knot Floer homology. Consider the grid
diagrams shown in Figure~\ref{fig:nondestabex}.\footnote{A note on
  conventions: to obtain grid diagrams as in
  Figure~\ref{fig:nondestabex} for which we can apply
  $\widehat{\theta}$ as in \cite{OST}, we either rotate a usual X-O
  diagram $90^\circ$ counterclockwise and interchange X's and O's (for
  the third diagram in Figure~\ref{fig:nondestabex}), or rotate a usual
  X-O diagram $90^\circ$ clockwise (for the first two diagrams). In
  the resulting diagrams, we use the convention from \cite{OST} that
  horizontal segments pass \textit{over} vertical segments. The
  resulting Legendrian front is either identical to the original front
  (for the third diagram), or related to the original front by the
  transformation $L \mapsto -\mu(L)$ (for the first two diagrams; for both,
  the atlas states that this transformation is a Legendrian
  isotopy), possibly along with a few elementary moves in Gridlink \cite{Cul}.
}
In each case, the
positive transverse pushoff $T_2$ of the grid diagram does not
maximize self-linking number, as can be seen by inspection of the atlas.
However, it can also be seen by inspection that $\widehat{\theta}$ is
nonzero for each of the diagrams: $\widehat{\theta}$ is represented in
the Manolescu--Ozsv\'ath--Sarkar complex for $\widehat{HFK}$ by
the upper-right corners of the X's, and it is clear in each case that
this generator is not in the image of the differential, since there
are no empty rectangles with NW-SE corners at two of these upper-right
corners. By a result of
\cite{OST}, $\widehat{\theta}=0$ for stabilizations of transverse
knots; it follows that $T_2$ is not a stabilization in each case.
\end{proof}

Two remarks are in order. First, the phenomenon of knots with a
non-maximal, non-destabilizable transverse representative was first
demonstrated by Etnyre and Honda \cite{EH2}, who showed that the
$(2,3)$ cable of the $(2,3)$ torus knot has this property; there is also
recent work by Lafontaine and Tosun, as well as Matsuda, in this
regard. However, the examples in Proposition~\ref{prop:nondestabex}
are significantly simpler in various ways than cables of torus
knots. For example, Shonkwiler and Vela-Vick \cite{SV} have shown that
the Legendrian contact homology of the $m(10_{161})$ knot in
Proposition~\ref{prop:nondestabex} is nontrivial, while an analagous
statement for the $(2,3)$ cable of the $(2,3)$ torus knot is still open.

Second, Proposition~\ref{prop:nondestabex} is an application of
the transverse HFK invariant that involves no computation, only an
inspection of a grid diagram. Previous applications of
$\widehat{\theta}$ to transverse nonsimplicity involved a computer
program (\cite{NOT}), an examination of naturality (\cite{OSt}), or
some relatively intricate linear algebra (\cite{KhN}). The simplicity
of the proof of Proposition~\ref{prop:nondestabex} suggests that the
knots considered there might be easily generalized to infinite
families of interesting nondestabilizable transverse knots. This is
indeed the case.

\begin{proposition}
For any $n \geq 1$, there is a topological knot $K_n$ with two
transverse representatives $T_{n,1},T_{n,2}$ such that
\[
sl(T_{n,2}) = sl(T_{n,1})-2n
\]
but $T_{n,2}$ is not the stabilization of any transverse knot. In
particular, $K_n$ is transversely nonsimple.
\label{prop:nondestabn}
\end{proposition}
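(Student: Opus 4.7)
The plan is to produce the family by iterated transverse connect sum. Fix one of the three triples $(K, T_1, T_2)$ from Proposition~\ref{prop:nondestabex}---for concreteness take $K = m(10_{161})$---so that $sl(T_2) = sl(T_1) - 2$, $T_2$ is non-destabilizable, and $\widehat{\theta}(T_2)$ is nonzero in $\widehat{HFK}(K)$. Set $K_n$ to be the $n$-fold connect sum $K \# K \# \cdots \# K$, and let $T_{n,1}$ and $T_{n,2}$ be the $n$-fold transverse connect sums of $T_1$ and $T_2$, respectively. The additivity formula $sl(A \# B) = sl(A) + sl(B) + 1$ for transverse connect sums, which is immediate from the Legendrian formulas $tb(L_1 \# L_2) = tb(L_1) + tb(L_2) + 1$ and $r(L_1 \# L_2) = r(L_1) + r(L_2)$, then gives
\[
sl(T_{n,1}) - sl(T_{n,2}) = n\bigl(sl(T_1) - sl(T_2)\bigr) = 2n,
\]
which is exactly the self-linking gap claimed in the proposition.

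The key step is to show that $T_{n,2}$ is not a transverse stabilization. The plan is to prove $\widehat{\theta}(T_{n,2}) \neq 0$ and then invoke the vanishing result of \cite{OST}. Using a block-diagonal grid presentation of $K_n$ obtained by stacking $n$ copies of the relevant grid from Figure~\ref{fig:nondestabex} along the diagonal, one can check directly from the definition of the Manolescu--Ozsv\'ath--Sarkar differential that empty rectangles cannot cross block boundaries. This produces a K\"unneth isomorphism $\widehat{HFK}(K_n) \cong \widehat{HFK}(K)^{\otimes n}$ under which the generator corresponding to $\widehat{\theta}(T_{n,2})$ is exactly $\widehat{\theta}(T_2)^{\otimes n}$. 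Working over a field, the tensor power is nonzero, so $T_{n,2}$ is non-destabilizable. The transverse nonsimplicity of $K_n$ then follows by comparing $T_{n,2}$ with the $n$-fold transverse stabilization of $T_{n,1}$: these representatives have the same self-linking number, but the former is non-destabilizable while the latter is visibly a stabilization.

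The main obstacle is the K\"unneth step for $\widehat{\theta}$, which does not appear in this exact form in the literature. The most direct route is the grid-diagram argument sketched above, where one pins down a specific grid presentation for $K_n$ whose upper-right X-corners are the disjoint union of those coming from each block, so that the cycle representing $\widehat{\theta}(T_{n,2})$ literally splits as a tensor product. As a safety net, one could appeal to the equivalent Legendrian invariant of Lisca--Ozsv\'ath--Stipsicz--Szab\'o \cite{LOSS}, whose behavior under Legendrian connect sum is well-established, or work with transverse homology \cite{Ngtranshom}, which likewise vanishes on stabilizations and satisfies a product formula under connect sum, giving an alternative proof in the $m(10_{145})$ and $12n_{591}$ cases.
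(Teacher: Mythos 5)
Your argument is correct in outline and does establish the literal statement, but it takes a genuinely different route from the paper---and, notably, it is exactly the route the paper explicitly sets aside. Immediately before the proof, the authors remark that ``Proposition~\ref{prop:nondestabn} as stated is already known and follows from work of Etnyre and Honda \cite{EHconnectsum}: given $K_1$, one can use the $n$-th connected sum of $K_1$ with itself as $K_n$.'' That is your construction. The paper's own proof instead generalizes the single grid diagram for $m(10_{161})$ from Figure~\ref{fig:nondestabex} to an explicit family of grids of size $3n+7$ (Figure~\ref{fig:nondestab}), reads off $\widehat{\theta}(T_{n,2})\neq 0$ directly from the absence of empty rectangles with NW--SE corners at upper-right corners of X's, and exhibits a topologically isotopic grid with $sl=4n+1$ versus $2n+1$. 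What that buys is the content of Proposition~\ref{prop:prime}: the knots $K_n$ so obtained are \emph{prime}, which is the actual novelty here and which the connected-sum construction cannot deliver. Your proof forfeits that.

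Two technical points on your version. First, stacking $n$ grids block-diagonally produces a grid diagram for the \emph{split} union, not the connected sum; the standard grid connected sum overlaps the blocks so that the total size is the sum of the sizes minus one per gluing, and the ``rectangles cannot cross block boundaries'' claim needs to be rechecked in that overlapped diagram. Second, you do not actually need a K\"unneth theorem for $\widehat{\theta}$ at all: Etnyre--Honda \cite{EHconnectsum} already gives that a transverse connected sum is destabilizable only if one of the summands is, so non-destabilizability of $T_{n,2}$ follows from Proposition~\ref{prop:nondestabex} with no Floer homology. If you do want the Floer-theoretic route, the behavior of the transverse invariant under connected sum is established in work of V\'ertesi rather than being an open gap, but invoking it is heavier machinery than the problem requires.
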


\begin{figure}
\centerline{
\includegraphics[width=4.8in]{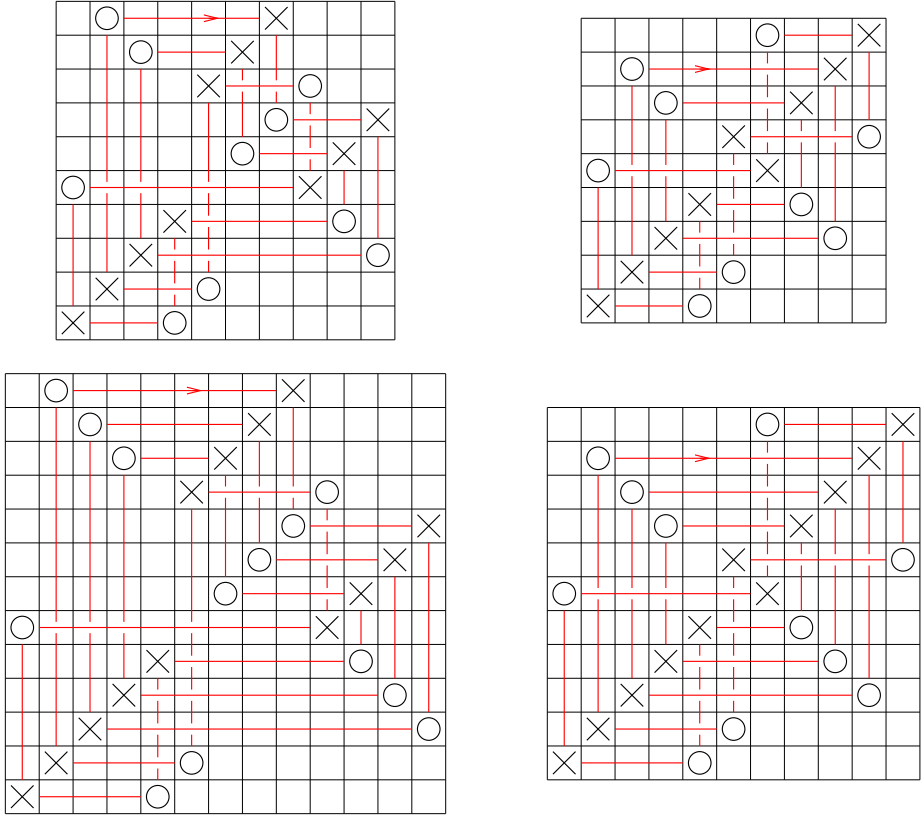}
}
\caption{
Grid diagrams for nondestabilizable transverse knots $T_{n,2}$ (left)
and topologically isotopic transverse knots $T_{n,1}$ 
(right), for $n=1,2$.
}
\label{fig:nondestab}
\end{figure}

Proposition~\ref{prop:nondestabn} as stated is
already known and follows from work of Etnyre and Honda
\cite{EHconnectsum}: given $K_1$, one can use the $n$-th
connected sum of $K_1$ with itself as $K_n$. However, the family $K_n$
we present in the proof of Proposition~\ref{prop:nondestabn} consists
of prime knots. We will only sketch a proof of primality and segregate
this result as Proposition~\ref{prop:prime} below.

It should be noted that Hiroshi Matsuda has independently 
obtained results similar
to Proposition~\ref{prop:nondestabn} (indeed, with apparently the same
family of examples); see the proof of Proposition~\ref{prop:prime}.

\begin{proof}[Proof of Proposition~\ref{prop:nondestabn}]
The grid diagram for $m(10_{161})$ given in
Figure~\ref{fig:nondestabex} generalizes readily to a family of grid
diagrams of size $3n+7$ and self-linking number $2n+1$, as shown (for
$n=1,2$) on the left hand side of Figure~\ref{fig:nondestab}; call the
positive transverse pushoff of these diagrams $T_{n,2}$.
By inspection, $\widehat{\theta}(T_{n,2}) \neq 0$ since there are no
empty rectangles with NW-SE corners at upper right corners of X's. On
the other hand, $T_{n,2}$ is evidently topologically isotopic to the
grid diagram on the right hand side of Figure~\ref{fig:nondestab},
whose positive transverse pushoff is a transverse knot $T_{n,1}$ with
self-linking number $4n+1$.
\end{proof}

\begin{proposition}
The family of knots $K_n$ in Proposition~\ref{prop:nondestabn} can be
chosen to be prime.
\label{prop:prime}
\end{proposition}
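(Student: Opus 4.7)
The plan is to identify the topological structure of each $K_n$ from its grid diagram and then apply a tangle-decomposition primality criterion to the whole family at once. First I would convert the left-hand grid diagrams in Figure~\ref{fig:nondestab} into a more conventional projection (closed braid, plat, or a labelled tangle form) by tracking the X-to-O horizontal and O-to-X vertical arcs. The visible pattern suggests that $K_n$ is obtained by inserting $n$ copies of a fixed two-string ``twist block'' into a fixed outer tangle, so that the $n=1$ case is $m(10_{161})$ and the $n$-th member of the family differs from the $(n-1)$-st by an extra full twist of a pair of strands. Once this is in place, $K_n$ sits inside $S^3$ as the union of two tangles along a distinguished decomposing $2$-sphere coming from a sphere around the twist region.

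The next step is to show that neither side of this $2$-sphere is a rational (hence trivial) tangle. The ``head'' tangle inherits its essential structure from $m(10_{161})$, which is itself prime, so it cannot be rational; the ``twist'' tangle is clearly non-rational for $n \geq 1$ provided the twist is performed on a non-trivial underlying $2$-string tangle, which can be checked directly from the diagram. Applying a standard result on sums of prime tangles (e.g.\ Lickorish's theorem that the sum of two prime tangles is a prime knot), one concludes that $K_n$ is prime for every $n$. A second, independent check---useful as a sanity test---is to set up a skein recursion in the twist parameter and produce a closed form for the Alexander polynomial $\Delta_{K_n}(t)$, then verify it is irreducible in $\mathbb{Z}[t^{\pm 1}]$; since the Alexander polynomial is multiplicative under connected sum, irreducibility rules out any non-trivial decomposition $K_n = K' \# K''$.

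The main obstacle is the first step. Grid diagrams are notoriously poor at revealing global topological structure, and it is not automatic that the extra twist rows added in passing from $K_{n-1}$ to $K_n$ correspond to a clean twist-box insertion in any planar projection one might draw. Once a sufficiently explicit tangle description is obtained, primeness of the two tangles and the application of Lickorish's criterion are routine, and the Alexander-polynomial argument gives an independent verification. This fits with the authors' announcement that only a sketch will be given, and with the cross-check against Matsuda's independently obtained family of examples.
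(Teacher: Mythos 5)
Your overall strategy (tangle decomposition plus Lickorish's prime-tangle theorem) is a legitimate route to primality in principle, but as written it has two concrete gaps. First, the key step of your main argument --- that the ``head'' tangle is prime because it ``inherits its essential structure from $m(10_{161})$, which is itself prime, so it cannot be rational'' --- is not a valid inference. Primality of a knot says nothing about primality of a tangle appearing in one of its diagrams: rational tangles close up to $2$-bridge knots, which are prime, and essentially any knot diagram can be split along a $4$-punctured sphere with a rational tangle on one side. To invoke Lickorish you must verify directly that \emph{both} tangles are prime (no local knots and not rational), and that verification is exactly the content you have not supplied. Second, your Alexander-polynomial ``sanity check'' is also incomplete: $\Delta_{K'\# K''}=\Delta_{K'}\Delta_{K''}$, so irreducibility of $\Delta_{K_n}$ only rules out decompositions in which both summands have nontrivial Alexander polynomial; it cannot exclude a summand with $\Delta=1$ (and such nontrivial knots exist). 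So neither branch of the proposal closes the argument.

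The paper takes a different and, for this family, more efficient route that avoids both problems. It reads off from Figure~\ref{fig:nondestab} that $T_{n,2}$ is the closure of an explicit braid in $B_{n+3}$, applies Matsuda's ``H-flype'' to convert this to the $3$-braid $\sigma_2^{-2}\sigma_1(\sigma_2^2\sigma_1^2)^{n+1}\sigma_2$, and then uses the fact that a composite knot of braid index $3$ must be a connected sum of two $(2,k)$ torus knots; hence primality reduces to checking that the closure is neither a $(2,k)$ torus knot nor such a connected sum, which the Alexander and Jones polynomials do detect. The braid-index-$3$ structure is what makes the polynomial comparison conclusive there, precisely where your unrestricted polynomial argument is not. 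If you want to salvage your approach, you would need to exhibit the decomposing sphere explicitly and prove primality of each tangle by hand (for all $n$ at once for the twist side, and by a separate argument for the head side); this is doable but is likely more work than the braid reduction.
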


\begin{proof}[Outline of proof]
It is straightforward to check from Figure~\ref{fig:nondestab} that
$T_{n,2}$ is topologically the closure of the braid
\[
(\sigma_1 \sigma_2 \cdots \sigma_{n+2}) \sigma_{n+2} \sigma_{n+1}^{-3} \sigma_{n} \cdots \sigma_2 \sigma_1
(\sigma_1 \sigma_2 \cdots \sigma_{n+2}) \sigma_{n} \cdots \sigma_2 \sigma_1
(\sigma_1 \sigma_2 \cdots \sigma_{n+2}) \in B_{n+3};
\]
the diagram for $T_{n,2}$ is essentially braided clockwise around the
middle of the grid. We claim that topological type $K_n$ of $T_{n,2}$
is prime. The braid above is related by an operation discovered by
Matsuda called an
``H-flype'', which preserves topological knot type of the
braid closure, to the braid
\[
(\sigma_1\sigma_2)\sigma_2(\sigma_1\sigma_2)\sigma_2^{-3}
((\sigma_1\sigma_2)(\sigma_2\sigma_1))^{n} (\sigma_1\sigma_2)
= \sigma_2^{-2}\sigma_1(\sigma_2^2\sigma_1^2)^{n+1}\sigma_2
\in B_3.
\]
To show that the closure of a $3$-braid is prime, it suffices to check
that it is not a $(2,k)$ torus knot or the connected sum of two such
torus knots. In this case, this can be shown by calculating the
Alexander and Jones polynomials of the closure of
$\sigma_2^{-2}\sigma_1(\sigma_2^2\sigma_1^2)^{n+1}\sigma_2$ and
comparing to the Alexander and Jones polynomials of $(2,k)$ torus knots.
\end{proof}

\subsection{Non-maximal, non-destabilizable Legendrian knots}
\label{ssec:Legnondestab}

Each of the examples from the preceding section (non-destabilizable
transverse knots with non-maximal self-linking number) produces an
analogous phenomenon for Legendrian knots: a non-destabilizable
Legendrian knot with non-maximal Thurston--Bennequin number.

\begin{proposition}
In each of the knot types $m(10_{145})$, $m(10_{161})$, and
$12n_{591}$, there are Legendrian knots $L_1,L_2,L_3$
for which $tb(L_1) = tb(L_2)+1 = tb(L_3)+2$ but neither $L_2$ nor $L_3$ is
not the stabilization of any Legendrian knot.
\label{prop:nondestabexLeg}
\end{proposition}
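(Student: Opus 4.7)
The plan is to adapt the proof of Proposition \ref{prop:nondestabex} to the Legendrian setting, guided by the following characterization: a Legendrian knot $L$ is non-destabilizable if and only if (i) the positive transverse pushoff of $L$ is non-destabilizable as a transverse knot, and (ii) $L$ has maximal Thurston--Bennequin number among Legendrian knots sharing the same positive transverse pushoff (equivalently, $L$ is the peak of its transverse slope in the mountain range). Necessity of (i) follows because $L = S_+(L')$ implies the positive transverse pushoff of $L$ is a transverse stabilization of that of $L'$; necessity of (ii) follows because $L = S_-(L')$ implies $L'$ shares the positive transverse pushoff of $L$ but has $tb(L') = tb(L)+1$. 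Conversely, (i) and (ii) together rule out both types of Legendrian stabilization.

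I would take $L_3$ to be the Legendrian corresponding to the grid diagram in Figure \ref{fig:nondestabex}. By Proposition \ref{prop:nondestabex} its positive transverse pushoff $T_2$ is non-destabilizable, giving (i). The atlas mountain range for each knot type shows that this grid represents the peak Legendrian of the $T_2$ slope, giving (ii); hence $L_3$ is non-destabilizable, with $tb(L_3)=\overline{tb}-2$ as read off the grid.

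For $L_2$ at $tb = \overline{tb}-1$, I would appeal to the atlas mountain range of each of the three knot types to locate a further non-destabilizable Legendrian at the required $tb$. In each case $L_2$ arises as the peak Legendrian of a further non-destabilizable transverse slope (beyond the slopes of $T_1$ and $T_2$); its non-destabilizability is established by (i) and (ii) via a fresh application of $\widehat{\theta}$ to a grid diagram for $L_2$, exactly as in the proof of Proposition \ref{prop:nondestabex}, together with atlas inspection to confirm $L_2$ is the peak of its slope. Finally, $L_1$ may be taken as any Legendrian of maximal Thurston--Bennequin number.

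The main obstacle is identifying and verifying this additional non-destabilizable Legendrian $L_2$, since Proposition \ref{prop:nondestabex} directly furnishes only the one non-destabilizable transverse class $T_2$ with $sl(T_2)=sl_{\max}-2$. Because the peak of the $T_2$ slope is already $L_3$ at $\overline{tb}-2$, the Legendrian $L_2$ at $\overline{tb}-1$ cannot lie in the $T_2$ slope; nor can it lie in the $T_1$ slope, since its only possible position there would be the destabilizable Legendrian $S_-(L_1)$. Thus $L_2$ must belong to a further non-destabilizable transverse class, and locating this class, together with the $\widehat{\theta}$ computation certifying its non-destabilizability, is the key step that must be carried out case by case for each of the three knot types and is not immediate from the previous proposition.
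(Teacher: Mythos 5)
There is a genuine gap, and it sits exactly where the content of the proposition lies. Your characterization of non-destabilizability is only an implication, not an equivalence: your two necessity arguments, read contrapositively, show that (i) and (ii) together are \emph{sufficient} for non-destabilizability, but a non-destabilizable Legendrian knot need not satisfy (ii), i.e.\ need not be the peak of its transverse slope. The knots in this very proposition are counterexamples: for $m(10_{145})$ the atlas representatives at $(tb,r)=(2,1)$ and $(1,0)$ have the \emph{same} positive transverse pushoff (they become isotopic after a single negative stabilization; see Table~\ref{table:stab}), yet both are non-destabilizable. This derails your plan in two places. First, the grid in Figure~\ref{fig:nondestabex} represents the Legendrian with $r=1$ and $sl=\overline{tb}-2$, hence $tb=\overline{tb}-1$, not $\overline{tb}-2$; your ``$L_3$'' is really the paper's $L_2$. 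Second, the knot you still need, at $tb=\overline{tb}-2$, does \emph{not} lie on ``a further non-destabilizable transverse slope'': there is no third non-destabilizable transverse class for these knot types, so the search you propose comes up empty, and even if you located the $(1,0)$ representative, your criterion (ii) fails for it and so cannot certify it. The paper handles this knot by a different mechanism: the computer program of \cite{NOT} shows that both $\lambda_+$ and $\lambda_-$ of that Legendrian (equivalently, $\widehat{\theta}$ of the pushoffs of both $L_3$ and $-L_3$) are nonzero in homology, which rules out the two types of destabilization separately, with no reference to slopes or to maximality of $tb$.

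A secondary problem: even for the knot whose pushoff is $T_2$, verifying your condition (ii) by ``inspection of the atlas mountain range'' is circular, since the mountain ranges are conjectural classifications --- the absence of a dot is not a proof of non-existence. What is actually needed is that no Legendrian representative exists at $(tb,r)=(\overline{tb},2)$, and the paper obtains this from the Morton--Franks--Williams HOMFLY-PT bound $tb(L)+|r(L)|\le\overline{tb}$, a rigorous upper bound independent of the atlas. With that substitution, your argument for this one knot coincides with the paper's; the genuinely missing ingredient is the $\lambda_{\pm}$ argument for the knot two below the maximum.
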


\begin{proof}
We prove the result for $m(10_{145})$; the proof for the other two
knots is nearly identical. Let $L_1,L_2,L_3$ be the Legendrian $m(10_{145})$
knots in the atlas with $(tb,r)=(3,0),(2,1),(1,0)$, respectively; note that
$L_2$ is isotopic to the leftmost diagram in
Figure~\ref{fig:nondestabex}. Since the positive transverse pushoff of
$L_2$ is non-destabilizable by Proposition~\ref{prop:nondestabex},
$L_2$ is not the negative stabilization of any Legendrian knot. On the
other hand, the HOMFLY-PT polynomial bound of Morton and
Franks--Williams states for all Legendrian $m(10_{145})$ knots $L$
that $tb(L)+|r(L)| \leq 3$. In particular, there is no $L$ with
$(tb,r) = (3,2)$, and thus $L_2$ is not the positive stabilization of
any Legendrian knot.

The computer program of \cite{NOT} shows that $L_3$ and $-L_3$ both
have nonzero $\widehat{\theta}$ invariant; in the language of
\cite{NOT,OST}, both $\lambda_+(L_3)$ and $\lambda_-(L_3)$ are nonzero
in homology. Thus neither $L_3$ nor $-L_3$ is negatively
destabilizable, and it follows that $L_3$ is neither positively nor
negatively destabilizable.
\end{proof}

Shonkwiler and Vela-Vick \cite{SV} have provided an alternate proof
that the knot $L_2$ for $m(10_{161})$ in
Proposition~\ref{prop:nondestabexLeg} is non-destabilizable, using
Legendrian contact homology and the characteristic algebra.

One can extend the argument of Proposition~\ref{prop:nondestabn} to prove the
existence for any $n\geq 1$ of a prime knot $K_n$ with Legendrian
representatives
$L_{n,1},L_{n,2}$ for which $tb(L_{n,2}) = tb(L_{n,1})-2n$ but
$L_{n,2}$ is not destabilizable. We remark that B.\ Tosun has obtained
a similar result by studying cables of torus knots.

We can use the preceding discussion to examine the Legendrian mountain
range for $m(10_{145})$, with similar analysis for $m(10_{161})$ and
$12n_{591}$. The shape of the mountain range shown in the atlas is
determined by Proposition~\ref{prop:nondestabex}, along with the
following result.

\begin{proposition}
There are at least four distinct
$m(10_{145})$ knots with $(tb,r)=(1,0)$. (Note that $\overline{tb}(m(10_{145}))=3$.)
\end{proposition}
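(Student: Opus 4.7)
The plan is to exhibit four explicit Legendrian representatives of $m(10_{145})$ at $(tb,r)=(1,0)$ and then separate them using the non-destabilizability of $L_3$ from Proposition~\ref{prop:nondestabexLeg} together with the transverse HFK classes $\lambda_\pm$ of \cite{NOT,OST}. Retaining the notation of Proposition~\ref{prop:nondestabexLeg}, so that $L_1,L_2,L_3$ are the Legendrian $m(10_{145})$ knots at $(tb,r)=(3,0),(2,1),(1,0)$ respectively, the four candidates I would take are $L_3$, $S_+S_-(L_1)=S_-S_+(L_1)$, $S_-(L_2)$, and $S_+(-L_2)=-S_-(L_2)$. All four have $(tb,r)=(1,0)$, as follows from the stabilization rules and $r(-L)=-r(L)$.

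The distinguishing step has two layers. First, Proposition~\ref{prop:nondestabexLeg} shows $L_3$ is non-destabilizable, whereas candidates two, three, and four are each explicit stabilizations, so $L_3$ is Legendrian-isotopic to none of them. Second, I would separate the three stabilized candidates using the vanishing pattern of $(\lambda_+,\lambda_-)$. The relevant formal properties are that $\lambda_+$ vanishes on any positive Legendrian stabilization and is invariant under negative stabilization, $\lambda_-$ does the reverse, and $\lambda_+(-L)=\lambda_-(L)$. The proof of Proposition~\ref{prop:nondestabexLeg} already gives $\lambda_+(L_3),\lambda_-(L_3)\neq 0$, and nonvanishing of $\lambda_+(L_2)$ follows from Proposition~\ref{prop:nondestabex}, since the positive transverse pushoff of $L_2$ is precisely the non-destabilizable knot $T_2$ on which $\widehat\theta\neq 0$; the orientation-reversal symmetry then gives $\lambda_-(-L_2)\neq 0$. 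Combining these yields the table
\begin{center}
\begin{tabular}{c|cc}
candidate & $\lambda_+$ & $\lambda_-$ \\ \hline
$L_3$ & nonzero & nonzero \\
$S_+S_-(L_1)$ & $0$ & $0$ \\
$S_-(L_2)$ & nonzero & $0$ \\
$S_+(-L_2)$ & $0$ & nonzero
\end{tabular}
\end{center}
and since the four rows are pairwise different, the four candidates are pairwise non-isotopic.

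The main obstacle I expect is bookkeeping rather than new content: one must verify the behavior of $\lambda_\pm$ under positive/negative Legendrian stabilization and under orientation reversal (in particular the identity $\lambda_+(-L)=\lambda_-(L)$) carefully on the nose, and cite the concrete computation of \cite{NOT} that $\widehat\theta(T_2)\neq 0$ for $m(10_{145})$. Everything else is a formal consequence of Propositions~\ref{prop:nondestabex} and~\ref{prop:nondestabexLeg}, and the same template should give the analogous four-knot statements for $m(10_{161})$ and $12n_{591}$, consistently with the remark that the shape of those mountain ranges is dictated by exactly this kind of analysis.
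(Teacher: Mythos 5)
Your proof is correct. It starts from the same four representatives ($L_3$, $S_+S_-(L_1)$, $S_-(L_2)$, $S_+(-L_2)$) and the same first step (non-destabilizability of $L_3$ separates it from the three visible stabilizations), but it diverges from the paper at the hardest point, namely separating $S_-(L_2)$ from $S_+(-L_2)$. The paper does this with an extra computer verification that $S_-^2(L_1)=S_-(-L_2)$: if $S_-(L_2)$ were isotopic to $S_+(-L_2)$, one further negative stabilization would give $S_-^2(L_2)=S_+S_-(-L_2)=S_+S_-^2(L_1)$, contradicting the transverse distinctness of $L_2$ and $S_+(L_1)$. Your route instead reads off the vanishing pattern of $(\lambda_+,\lambda_-)$, obtaining $(\neq 0,0)$ versus $(0,\neq 0)$, which is purely formal given the stabilization and orientation-reversal properties of the classes of \cite{OST,NOT} together with $\widehat{\theta}(T_2)\neq 0$ from Proposition~\ref{prop:nondestabex}; your middle rows (separating both of these from $S_+S_-(L_1)$, whose pattern is $(0,0)$) are essentially the paper's transverse-pushoff argument repackaged. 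What your approach buys is a proof requiring no computer input beyond what Propositions~\ref{prop:nondestabex} and~\ref{prop:nondestabexLeg} already use; the only care needed is the convention bookkeeping you flag (which of $S_\pm$ preserves which of $\lambda_\pm$, and the identity $\lambda_+(-L)=\lambda_-(L)$), and even there the argument is robust to swapping the $\pm$ labels, since the four patterns $(\neq0,\neq0)$, $(0,0)$, $(\neq0,0)$, $(0,\neq0)$ remain pairwise distinct under that exchange.
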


\begin{proof}
Let $L_1,L_2,L_3$ be the Legendrian $m(10_{145})$ knots from
(the proof of) Proposition~\ref{prop:nondestabexLeg}.
We claim that $S_+S_-(L_1)$, $S_-(L_2)$, $S_+(-L_2)$, and $L_3$
are pairwise distinct.

Since $L_3$ is non-destabilizable by the proof of
Proposition~\ref{prop:nondestabexLeg}, it is distinct from
$S_+S_-(L_1)$, $S_-(L_2)$, and $S_+(-L_2)$.
Since
$L_2$ and $S_+(L_1)$ have nonisotopic positive transverse pushoffs,
$S_-(L_2)$ and $S_-S_+(L_1)=S_+S_-(L_1)$ are distinct, as are
$S_+(-L_2) = -S_-(L_2)$ and $S_+S_-(L_1) = -S_+S_-(L_1)$.

It remains to show that $S_-(L_2)$ and $S_+(-L_2)$ are distinct.
One can verify by computer that $S_-^2(L_1)=S_-(-L_2)$, and so
$S_+S_-(-L_2) = S_+S_-^2(L_1)$. On the other hand, since $L_2$ and
$S_+(L_1)$ have distinct positive transverse pushoffs, $S_-^2(L_2)
\neq S_+S_-^2(L_1) = S_+S_-(-L_2)$. Thus $S_-(L_2) \neq S_+(-L_2)$, as desired.
\end{proof}

It is interesting to compare the mountain ranges of $m(10_{145})$,
$m(10_{161})$, and $12n_{591}$ with the mountain range of the $(2,3)$
cable of the $(2,3)$ torus knot from \cite{EH2}, which exhibits
similar behavior but is slightly different.

Besides $m(10_{145})$, $m(10_{161})$, and $12n_{591}$, the atlas
produces three other candidates for knots with non-maximal
non-destabilizable Legendrian representatives: $m(10_{139})$,
$10_{161}$, and $m(12n_{242})$. For these knots, it appears that a new
behavior emerges: the mountain ranges seem to have non-maximal peaks.

\begin{conjecture}
For each of $m(10_{139})$, $10_{161}$, and $m(12n_{242})$, there
exists a Legendrian knot $L$ with $tb(L)$ strictly less than the
maximal possible $tb$, for which there is no other Legendrian
representative with $(tb,r) = (tb(L)+1,r(L)+1)$ or $(tb(L)+1,r(L)-1)$.
\label{conj:nondestab}
\end{conjecture}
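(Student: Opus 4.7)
My plan is to fix, for each of the three knot types $m(10_{139})$, $10_{161}$, and $m(12n_{242})$, a specific Legendrian representative $L$ from the atlas with $tb(L) < \overline{tb}$ that is the alleged non-maximal peak, and to reduce the statement to three sub-claims: (a) $L$ exists with the stated $(tb,r)$, which is witnessed by the grid diagram listed in the atlas; (b) no Legendrian representative of the same topological type has $(tb,r) = (tb(L)+1, r(L)+1)$; and (c) the analogous statement with $r(L)-1$ in place of $r(L)+1$. Claim (a) is immediate once a grid is exhibited, and (b) and (c) together are equivalent to the desired conjectural condition.

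For (b) and (c), the natural first tool is the HOMFLY--PT (Morton--Franks--Williams) inequality together with the Kauffman polynomial inequality, both of which produce bounds of the form $tb + |r| \leq N$ for a constant $N$ read off from the relevant polynomial. In favorable cases, for instance when $L$ sits at $r=0$ one step below $\overline{tb}$ and the polynomial bound on $tb+|r|$ equals $\overline{tb}$, the two offending slots $(\overline{tb},\pm 1)$ are automatically excluded. My first step would therefore be to compute the HOMFLY--PT and Kauffman polynomials of the three knots and check whether either bound already empties the slots directly above $L$.

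As a supplement, to confirm at least the weaker conclusion that $L$ itself is non-destabilizable, I would mimic the proof of Proposition~\ref{prop:nondestabexLeg}: run the program of \cite{NOT} on a grid representative of $L$ and verify that both $\lambda_+(L)$ and $\lambda_-(L)$ are nonzero in $\widehat{HFK}$, which would rule out $L$ being a stabilization of either sign. If a polynomial bound fails to empty one of the slots, one could still hope to eliminate candidate Legendrian knots inhabiting that slot via transverse homology \cite{Ngtranshom}, Legendrian contact homology together with the characteristic algebra in the manner of Shonkwiler--Vela-Vick \cite{SV}, or Legendrian SFT \cite{NgSFT}, each of which can in principle detect a discrepancy between the classical $(tb,r)$ data and a finer isotopy invariant that is forced to vanish.

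The hard part, and the reason the authors state this only as a conjecture, is the final task of certifying that a mountain range slot is genuinely empty. The computer search underlying the atlas cannot prove emptiness, because stabilization is unbounded in grid size; a rigorous emptiness statement requires either a proof of the arc-index conjecture stated earlier in the paper, which would reduce (b) and (c) to finite searches, or a new upper-bound invariant that is sharp at the offending $(tb,r)$ values. For the specific knots $m(10_{139})$, $10_{161}$, and $m(12n_{242})$ it is plausible that a careful case-by-case combination of the polynomial bounds with transverse homology or contact homology suffices, but I would expect this slot-emptying step --- not the verification of $L$ itself --- to be the true bottleneck.
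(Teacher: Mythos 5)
The first thing to say is that the paper does not prove this statement: it is stated as Conjecture~\ref{conj:nondestab} precisely because the authors cannot certify that the mountain-range slots $(tb(L)+1,r(L)\pm 1)$ are empty, and your proposal, which candidly defers that same step to a hoped-for ``careful case-by-case combination'' of tools, is likewise not a proof. So there is no paper argument to compare against, but your plan as written would not close the conjecture, for two concrete reasons. First, the Morton--Franks--Williams bound has the form $tb+|r|\leq N$, and for a representative $L$ with $r(L)\geq 1$ the slot $(tb(L)+1,r(L)-1)$ has exactly the same value of $tb+|r|$ as $L$ itself; no bound of this shape can exclude that slot while admitting $L$. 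For $m(10_{139})$, for instance, the conjectural non-maximal peak sits at $(tb,r)=(-17,4)$, not at $r=0$, so the ``favorable case'' you describe does not occur and the slot $(-16,3)$ is untouchable by these estimates. Second, the fallback invariants you list are explicitly ruled out by the paper for these three knots: the authors note that the transverse techniques do not apply (there appears to be a unique non-destabilizable transverse knot in each of these types, so the $\lambda_\pm$ computation you borrow from Proposition~\ref{prop:nondestabexLeg} has nothing to distinguish), and that the relevant non-maximal Legendrian representatives all have vanishing Legendrian contact homology, which also disables the characteristic-algebra approach of \cite{SV} and linearized or SFT-type refinements built on augmentations.

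Where your write-up is sound is in its diagnosis: the bottleneck is proving emptiness of a $(tb,r)$ slot, which the grid-diagram search cannot do because stabilization is unbounded in grid size, and which would follow from (an appropriate form of) the arc-index conjecture stated earlier in the paper. That is exactly the authors' position, and it is why this statement appears as a conjecture rather than a proposition; your proposal should not be read as establishing it.
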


It may be worth remarking that the transverse techniques from the
previous sections are not applicable to
Conjecture~\ref{conj:nondestab}; it
appears that there is a unique non-destabilizable transverse knot in
each of the knot types. In addition, contact homology fails to provide an obstruction
to destabilizability: the non-maximal, conjecturally non-destabilizable
Legendrian knots of type $m(10_{139})$, $10_{161}$, and $m(12n_{242})$ in the atlas all
have vanishing Legendrian contact homology.

\begin{figure}
\centerline{
\includegraphics[width=2in]{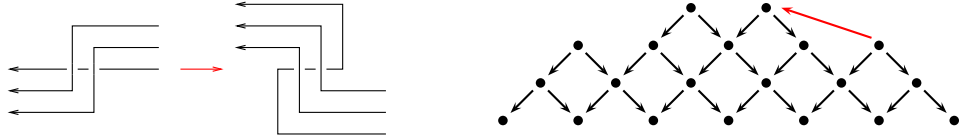} \hspace{0.5in}
\begin{pspicture}(6,1.9)
				\psdot(2.5,1.7)
				\psdot(3.5,1.7)
				\psline{->}(2.4,1.6)(2.1,1.3)
				\psline{->}(2.6,1.6)(2.9,1.3)
				\psline{->}(3.4,1.6)(3.1,1.3)
				\psline{->}(3.6,1.6)(3.9,1.3)
				\psdot(1,1.2)
				\psdot(2,1.2)
				\psdot(3,1.2)
				\psdot(4,1.2)
				\psdot(5,1.2)
				\psline{->}(0.9,1.1)(0.6,0.8)
				\psline{->}(1.1,1.1)(1.4,0.8)
				\psline{->}(1.9,1.1)(1.6,0.8)
				\psline{->}(2.1,1.1)(2.4,0.8)
				\psline{->}(2.9,1.1)(2.6,0.8)
				\psline{->}(3.1,1.1)(3.4,0.8)
				\psline{->}(3.9,1.1)(3.6,0.8)
				\psline{->}(4.1,1.1)(4.4,0.8)
				\psline{->}(4.9,1.1)(4.6,0.8)
				\psline{->}(5.1,1.1)(5.4,0.8)
				\psdot(0.5,0.7)
				\psdot(1.5,0.7)
				\psdot(2.5,0.7)
				\psdot(3.5,0.7)
				\psdot(4.5,0.7)
				\psdot(5.5,0.7)
				\psline{->}(0.4,0.6)(0.1,0.3)
	\psline{->}(0.6,0.6)(0.9,0.3)
				\psline{->}(1.4,0.6)(1.1,0.3)
				\psline{->}(1.6,0.6)(1.9,0.3)
				\psline{->}(2.4,0.6)(2.1,0.3)
				\psline{->}(2.6,0.6)(2.9,0.3)
				\psline{->}(3.4,0.6)(3.1,0.3)
				\psline{->}(3.6,0.6)(3.9,0.3)
				\psline{->}(4.4,0.6)(4.1,0.3)
				\psline{->}(4.6,0.6)(4.9,0.3)
				\psline{->}(5.4,0.6)(5.1,0.3)
				\psline{->}(5.6,0.6)(5.9,0.3)
				\psdot(0,0.2)
				\psdot(1,0.2)
				\psdot(2,0.2)
				\psdot(3,0.2)
				\psdot(4,0.2)
				\psdot(5,0.2)	
				\psdot(6,0.2)	
                                \psline[linecolor=red]{->}(4.9,1.3)(3.7,1.7)
		  \end{pspicture}
}
\caption{
A local move on Legendrian knots, preserving topological type and
changing $(tb,r)$ by $(+1,-3)$. On the right, the move in the context
of the mountain range for $m(10_{139})$, $10_{161}$, or
$m(12n_{242})$.
}
\label{fig:localmove}
\end{figure}

As a side note, each of the three knots in
Conjecture~\ref{conj:nondestab} has a ``local move'' relating a
non-maximal peak to a maximal peak, shown in
Figure~\ref{fig:localmove}. Presumably this move can be used to
construct many more examples of knots with non-maximal peaks in their
mountain ranges.

\subsection{Multiple linearized contact homologies}

Melvin and Shrestha \cite{MS} discovered the phenomenon of Legendrian knots that have more than one
possible linearized contact homology (corresponding to different augmentations of the Chekanov--Eliashberg
differential graded algebra). Their examples included the knots listed in our atlas as $m(8_{21})$ and (the second
representative of) $m(9_{45})$.

Our atlas provides more examples of Legendrian knots with multiple linearized contact homologies, of topological type
$11n_{95}$ and $11n_{118}$. In addition, the atlas finds another $m(9_{45})$ example with multiple linearized
contact homologies, distinct from the Melvin--Shrestha example.

\subsection{Discussion of other particular knots}

In the atlas, there are many instances of Legendrian knots with the
same classical invariants (topological type, Thurston--Bennequin
number, and rotation number) that are provably or conjecturally
non-isotopic. Often these can be distinguished from each other by
($0$-)graded ruling invariant or linearized contact homology, both included
in the atlas. The computations for ruling invariant and linearized
contact homology were performed using \cite{Sabloffprogram}.

Transverse nonsimplicity, as discussed in
Sections~\ref{ssec:transI} and~\ref{ssec:transII}, distinguishes
other Legendrian knots. For transversely nonsimple knots, there is a diagonal of
nonisotopic Legendrian knots with the same classical invariants that
travels down and to the left (following negative Legendrian
stabilization), and another diagonal down and to the right (following
positive stabilization).

Here we document the remaining cases of Legendrian knots in the atlas
that we can provably distinguish by other means. We use the convention
that in a particular knot type, the grid diagrams depicted in the
atlas represent Legendrian knots labeled $L_1,L_2,L_3,\ldots$ from top
to bottom.

\begin{itemize}

\item
$6_2$: The fact that the listed knot with $(tb,r) = (-7,0)$ is not
Legendrian isotopic to its mirror is proven in \cite{mir}, with a reproof in \cite{CLI}.

\item
$6_3$: The two Legendrian $6_3$ knots in the table have previously
been considered in \cite[section 4.3]{CLI}, where they are called
$K_4$ and $K_3$, respectively, and are proven to be distinct via the
characteristic algebra.

\item
$m(7_2)$: The fact that there are five distinct Legendrian representatives with $(tb,r)=(1,0)$, including a pair of nonisotopic mirrors, is proven in \cite{ENV} and essentially follows from the work on transverse twist knots in \cite{OSt}.

\item
$7_4$: Linearized contact homology distinguishes $L_4$ from the other
three. The knots $L_2,L_3$ were considered in \cite[section 4.2]{CLI},
where they were called $K_1,K_2$, respectively. As noted in
\cite{CLI}, these
two knots can be distinguished by their characteristic algebras. A minor
extension of the argument from \cite{CLI} also shows that $L_3=K_2$ is not
Legendrian isotopic to its Legendrian mirror: in the characteristic
algebra for $K_2$, there are elements $a_{13}$ and $a_{12}$ with
degrees $-2$ and $2$, respectively, for which $a_{13}a_{12}=1$, but no
elements $x,y$ with degrees $2$ and $-2$, respectively, for which
$xy=1$. See also \cite[section 4.1]{CLI}. Since the computer program
shows that $L_1$ and $L_2$ are each isotopic to their mirrors, neither
is isotopic to $L_3$. It is an interesting open problem to distinguish
$L_1$ and $L_2$.

\item
$m(7_6)$: Linearized contact homology distinguishes $L_3$ and $-L_3$
from $L_1$ and $L_2$. The computer program shows that $L_1,-L_3$ have
the same negative stabilization, as do $L_2,-L_2,L_3$; see Table~\ref{table:stab}. On the other hand, from \cite{Ngtranshom}, $L_1$ and $L_2$
represent distinct transverse knots. Thus $L_1,-L_3$ are distinct from
$L_2,-L_2,L_3$ as Legendrian knots. Since $L_1=-L_1$ by the computer
program, orientation reversal implies that $L_1,L_3$ are distinct from
$L_2,-L_2,-L_3$. As a result, $L_1,L_3,-L_3$ are pairwise distinct,
and all are distinct from $L_2,-L_2$. We do not currently know if
$L_2$ and $-L_2$ are isotopic.

\item
$9_{48}$, $m(10_{132})$, $10_{136}$, $m(10_{140})$: The Legendrian
knots of these types are distinguished using the data from Table~\ref{table:stab},
in a manner similar to $m(7_6)$ above.

\end{itemize}


\section{The Legendrian Knot Atlas}
\label{sec:atlas}

The table on the following pages depicts conjectural classifications
of Legendrian knots in all prime knot types of arc index up to
$9$. For each knot, we present a conjecturally complete list of
non-destabilizable Legendrian representatives, modulo the symmetries
of orientation reversal $L \mapsto -L$ and Legendrian mirroring $L
\mapsto \mu(L)$. As usual, rotate
$45^\circ$ counterclockwise to translate from grid diagrams to fronts.

Each knot also comes with its conjectural Legendrian mountain range
(extending infinitely downwards),
comprised of black and red dots, plotted according to their
Thurston--Bennequin number (vertical) and rotation number
(horizontal). Arrows represent positive and negative Legendrian
stabilization. The values of $(tb,r)$ are not labeled but can be
deduced from the values given for the non-destabilizable
representatives.  Boxes surround
values of $(tb,r)$ that have, or appear to have, more than one Legendrian
representative, and mountain ranges without boxes represent knot types
that are conjecturally Legendrian simple. The dots represent conjecturally
distinct Legendrian isotopy classes; black dots are provably distinct
classes, while red dots are conjecturally but not provably distinct from the black dots and each other. Thus the black
dots represent a lower bound for the Legendrian mountain range, and
the totality of dots represent our current best guess for the precise
mountain range (which however could theoretically be larger or smaller
than what is depicted).

Legendrian knots have been classified for several knot types,
including torus knots and $4_1$ \cite{EH} and twist knots
\cite{ENV}. These comprise the knots $3_1,4_1,5_1,5_2,6_1,7_1,7_2$ in
the table, along with their mirrors; for these knots, the mountain
ranges depicted in the atlas agree with the classification results. In
the table, we indicate torus knots by $T(p,q)$ and twist knots by
$K_n$ (for the knot with $n$ half-twists, with the convention of
\cite{ENV}).

Using symmetries, we can produce from any Legendrian knot $L$ up to
four possibly distinct Legendrian knots: $L$, $-L$, $\mu(L)$, and
$-\mu(L)$. The table depicts one representative from each of
these orbits of up to four knots, along with information about which
of the four knots in the orbit are isotopic, if any. For knots with
nonzero rotation number, we choose a representative $L$ with positive
rotation number, and $L$ is trivially distinct from $-L$ and $\mu(L)$
(this fact depicted by hyphens in the table).

Grid diagrams labeled with matching letters (see e.g. $6_2$) mark Legendrian knots that we believe but cannot yet prove to be distinct. Question marks indicate knots where we believe but cannot prove that $L$ is distinct
from $-L$, $\mu(L)$, or $-\mu(L)$. All check marks have been verified
by computer. All X marks without question marks have been proven, via
various techniques. These techniques include two nonclassical Legendrian invariants, the
graded ruling invariant \cite{ChP} and the set of (Poincar\'e polynomials for) linearized contact homologies \cite{Ch}, which have been computed, where relevant, using the
\textit{Mathematica} notebook \cite{Sabloffprogram}. (Knots
with no graded rulings/augmentations are denoted in these columns by a
hyphen, for nonzero rotation number, or $\emptyset$, for zero rotation
number.) For Legendrian knots that we have succeeded in distinguishing
by means besides these invariants,
please see Section~\ref{sec:phenomena} for documentation.

For some knots, the atlas omits a bit of information necessary to
deduce a complete (conjectural) Legendrian classification, namely
which Legendrian knots with the same $(tb,r)$ stabilize to isotopic
knots. This information is presented in Table~\ref{table:stab}, which
follows the atlas. The knots given in Table~\ref{table:stab} are those
where there is some ambiguity about isotopy classes after
stabilization; for all of those knots, the program guesses that the
relevant Legendrian representatives either become isotopic after one
(positive or negative) stabilization, or remain nonisotopic after
arbitrarily many stabilizations.


\input{atlas-source}

\begin{table}
\[
\hspace{-0.5in}
\begin{array}{|c||c|c|}
\hline
\text{Knot} & \text{Isotopy classes after $S_+$} & \text{Isotopy
  classes after $S_-$}\\
\hline\hline
m(7_2) & L_1,L_2,-L_3 \distinct L_3,L_4 & L_1,L_2,L_3 \distinct -L_3,L_4 \\
\hline
m(7_6) & L_1,L_3 \distinct L_2,-L_2,-L_3 &
L_1,-L_3 \distinct L_2,-L_2,L_3 \\
\hline
9_{44} & L_1,-\mu(L_1) \distinct L_2,-\mu(L_3) \maybe -\mu(L_2),L_3 &
-L_1,\mu(L_1) \distinct -L_2,\mu(L_3) \maybe \mu(L_2),-L_3\\
\hline
m(9_{45}) & L_1,\mu(L_1),\mu(L_2) \maybe -L_1,-\mu(L_1),L_2 &
L_1,\mu(L_1),L_2 \maybe -L_1,-\mu(L_1),\mu(L_2) \\
\hline
9_{48} & L_1,L_3 \distinct L_2,-L_2,-L_3,L_4,\mu(L_4) &
L_1,-L_3 \distinct L_2,-L_2,L_3,L_4,\mu(L_4) \\
\hline
10_{128} & L_1,-L_2 \maybe \mu(L_1),L_2 &
L_1,L_2 \maybe \mu(L_1),-L_2 \\
\hline
m(10_{132}) & L_1 \distinct -L_1,L_2 & L_1,L_2 \distinct -L_1 \\
\hline
10_{136} & L_1,L_4,\mu(L_4) \distinct -L_1,L_2,L_3,\mu(L_3) &
L_1,L_2,L_3,\mu(L_3) \distinct -L_1,L_4,\mu(L_4) \\
\hline
m(10_{140}) & L_1 \distinct -L_1,L_2 & L_1,L_2 \distinct -L_1 \\
\hline
m(10_{145}) & S_-(L_1) \distinct -L_2,L_3 &
S_+(L_1) \distinct L_2,L_3 \\
\hline
10_{160} & L_1,L_2,\mu(L_2) \maybe \mu(L_1),-L_2,-\mu(L_2) &
L_1,-L_2,-\mu(L_2) \maybe \mu(L_1),L_2,\mu(L_2) \\
\hline
m(10_{161}) & S_-(L_1) \distinct -L_2,L_3 &
S_+(L_1) \distinct L_2,L_3 \\
\hline
12n_{591} & S_-(L_1) \distinct -L_2,L_3 &
S_+(L_1) \distinct L_2,L_3 \\
\hline
\end{array}
\]
\caption{
Information about isotopy classes of Legendrian knots after
stabilization. For each knot type, $L_1,L_2,\ldots$ denote the
Legendrian knots depicted in the atlas, ordered from top to
bottom. In this table, knots separated by commas can be shown to be
Legendrian isotopic after one application of the appropriate
stabilization. Vertical bars separate knots that are provably distinct
after any number of the appropriate stabilizations; colons separate
knots that the program conjectures are distinct after any number of
stabilizations.
}
\label{table:stab}
\end{table}

\end{document}